\theoremstyle{plain}
\newtheorem{mainthm}{Theorem}
\newtheorem{theorem}{Theorem}[section]
\newtheorem{proposition}[theorem]{Proposition}
\theoremstyle{definition}
\newtheorem{example}[theorem]{Example}
\numberwithin{equation}{section}
\let\oldmarginpar\marginpar
\renewcommand\marginpar[1]{\-\oldmarginpar[\raggedleft\footnotesize \textcolor{red}{#1}]{\raggedright\footnotesize\textcolor{red}{#1}}}
\begin{document}
\title{Attractor for minimal iterated function systems}
\author[A. Sarizadeh]{Aliasghar Sarizadeh}
\address{{Department of Mathematics,  Ilam University}}
\address{{Ilam, Iran.}}
\email{ali.sarizadeh@gmail.com}
\email{a.sarizadeh@ilam.ac.ir}

\begin{abstract}
In the present work, we study the attractors of iterated function systems (IFSs) on connected and compact metric spaces.
%As is well known, in non-contractive IFSs, there is not known criteria for existence of such attractors.
%Here,
We prove that the whole of the phase space of a forward minimal IFS, for which some map admits an attracting fixed point,  
is an  attractor. 
\end{abstract}

\subjclass[2010]{ 37B55, 37B65}
% \maketitle
\keywords{iterated function system,  equicontinuity,   
Hutchinson operator, strict attractor, shadowing property}
\maketitle
\setcounter{tocdepth}{1}
%\tableofcontents

%<<<<<<<<<<<<<<<<<<<<<<<<<<<<<<<<<<<<<<<<<<<<<<
\section{Introduction: preliminaries and results}
The contractivity of iterated function systems is a folkloric way to generate and explore a variety of self-similar fractals,  in the sense of Barnsley.
The base of the references on the self-similar  fractals is in the works by Hutchinson \cite{H81}, Mandelbrot  \cite{M82} and Barnsley \cite{B88}.

We begin with some notation and definitions. 
A dynamical system in the present paper is  a triple $(X,\Gamma, \varphi)$, where $X$ is a set, $\Gamma$ is a semigroup
and $\varphi:\Gamma \times X\to X$ is a map. Sometimes we write the dynamical system as a pair $(X,\Gamma )$.
In particular, let $\Gamma$ be a semigroup generated by a map $\phi : X \to X$. 
Take $\varphi : \Gamma \times X \to X$ so that $\varphi (\phi^n,x)=\phi^n(x)$, for every $n\in \mathbb{N}$. In this case,
$\Gamma = \{\phi^n\}_{n\in \mathbb{N}}$, the classical dynamical system $(X, \Gamma)$ is called 
a \textit{cascade system} and we use the standard notation: $(X, \phi)$. 

Let $\Phi^+$ be a semigroup generated by a finite set of maps, $\Phi$, on a set $X$. 
Consider a dynamical system $(X, \Phi^+, \varphi)$ with $\varphi:\Phi^+ \times X \to X$ 
so that $\varphi (\phi, x)=\phi(x)$. The dynamical system  $(X, \Phi^+, \varphi)$ 
is called an \textit{iterated function system} (IFS) associated to $\Phi$. We use the usual  notation: $(X, \Phi^+)$.

% Now, let us equip $X$ with a distance $d$. 
Throughout this paper, $X$ stands for a connected and compact metric space. We write $d$ for the metric on $X$.
We will assume that $\Phi$ consists of a finite collection $\{\phi_1,\ldots,\phi_\ell\}$ of  continuous maps on $X$.

%We remark some definitions for an IFS $(X,\Phi^+)$.
%\begin{itemize}
%\item A point $p$ is called quasi-attracting fixed point of IFS $(X, \Phi^+)$, if there exists $\phi:X\to X$ belong to $\Phi$ with attracting fixed point $p$.
%%\item IFS $(X, \Phi^+)$ is called \textit{transitive}, if for any two non-empty open sets $U$ and $V$ in $X$,
%% there exists $\varphi \in \Phi^+$ such that $\varphi(U)\cap V\neq\emptyset$.
%\item 
A point $x$ is called a transitive point if $\overline{\mathcal{O}^+_{\Phi}(x)}=X$, where $\overline{A}$ is the closure of a subset $A$ of $X$ and $\mathcal{O}^+_{\Phi}(x)=\bigcup_{\varphi \in \Phi^+}\varphi(x)$.
%%\item denote by $\Phi^-$ the semigroup generated by the set of  $\{\phi^{-1}:\phi \in \Phi\}$. Also, take, $\mathcal{O}^-_{\Phi}%(x)=\bigcup_{\varphi \in \Phi^-}\varphi(x)$.
%\item 
Denote the set of all transitive points by Trans$(\Phi)$.
%\item Denote the set $\{\{x\}|\ x\in X\}$ by $\mathcal{SP}(X)$
%\item 
A subset $A$ of $X$ is called \textit{forward invariant} (resp. \textit{invariant}) for $(X, \Phi^+)$ or $\Phi$ if $\phi (A)\subset A$ (resp. $\phi(A)=A$), for every $\phi \in \Phi^+$.
%\item 
IFS $(X, \Phi^+)$ is called \textit{forward minimal} if $X$ does not contain any non-empty, proper, closed subset which is forward invariant for $(X, \Phi^+)$. It is equivalent to Trans$(\Phi)=X$.
%\item

Associated to an IFS $(X,\Phi^+)$, for every $x_1,x_2\in X$, write
\begin{equation}\label{defmetric}
  d_\Phi(x_1,x_2):=\sup_{ \phi\in \Phi^+} d(\phi(x_1), \phi(x_2)).
\end{equation}
Clearly, $d_\Phi$ is a metric on $X$. 
%\item 
A point $x$ is a sensitive point when the identity map $id_X:(X,d)\to (X,d_\Phi)$ is discontinuous at $x$.
%\item 
An IFS $(X,\Phi^+)$ is sensitive when there exists $\epsilon >0$ such that every non-empty open subset has $d_\Phi$-diameter at least $\epsilon$.
%\item 
A point $x$ which is not sensitive  will be called a equicontinuous  point i.e.
a point $x$ is an equicontinuous point when the identity map $id_X:(X,d)\to (X,d_\Phi)$ is continuous at $x$. 
%We denote the set of all equicontinuous points of IFS $(X,\Phi^+)$ by $\mathrm{Eq} (\Phi)$.
%\item 
An IFS $(X,\Phi^+)$ is equicontinuous when every point is an equicontinuous point and so $d_\Phi$ is a metric equivalent to $d$ on $X$.
%\end{itemize}

%As is well known, IFS $(X, \Phi^+)$ and $(\mathcal{K}(X),F)$  have identical behavior  in many dynamical properties but not generally.
The  Hutchinson operator $F:2^X\to 2^X$ associated with $\Phi$ is defined on the power set $2^X$ via 
$$F(Y):=\overline{\bigcup_{\phi\in \Phi}\phi(Y)};\ \ \ \ Y\subseteq X.$$
Write $\mathcal{K}(X)$ for the class of compact subsets of $X$.
The restriction $F:\mathcal{K}(X)\to \mathcal{K}(X)$ is well-defined.
For the Hutchinson operator $F$, define 
\[d_F(x_1,x_2):=\sup_{i\geq 0}d_H(F^i(x_1), F^i(x_2)),\] for every $x_1, x_2$.
Note that  $d_F$  is a metric on  $\mathcal{K}(X)$. 
By taking 
$$\Phi^k=\{\varphi_{k} \circ \dots \circ \varphi_{1} | \ \forall \ i=1,\dots,k; \ \varphi_{i}\in \Phi\},$$
one can write  $F^k(Y)=\overline{\bigcup_{\phi\in \Phi^k}\phi(Y)}$, for every subset $Y$ of $X$. For $\varphi \in \Phi^+$, 
let $|\varphi|$ denote the minimal natural number $n$ with
$\varphi \in \Phi^n$. 

%An IFS $(X, \Phi^+)$ is called 
%\textit{topologically mixing}, 
%if for any two non-empty open sets $U$ and $V$ in $X$, 
%there exists $n_0$ such that $F^i(U)\cap V\neq \emptyset$, for any $i\geq n_0$.

A non-empty compact invariant set $A$ of $X$ is called an attractor of IFS $(X, \Phi^+)$  
 if for every compact subset $B$ of $X$ in the Hausdorff metric
\begin{equation}\label{3}
 F^j(B) \to A \text{ \ as \ } j \to \infty.
 \end{equation}
An attractor is this way uniquely defined. % and compact.
An IFS with an attractor is called an asymptotically stable system.
A local version of attractor is called strict attractor. To be more precise, a non-empty closed subset $A$ of $X$ is a strict attractor of IFS $(X, \Phi^+)$  
if there exists an open neighborhood $U(A) \supset A$ such that in the Hausdorff metric
\begin{equation}\label{3}
 F^j(B) \to A \text{ \ as \ } j \to \infty,
 \end{equation}
for every compact subset $B$ of $U(A)$.
The basin $B(A)$ of an attractor $A$ is the union of all open neighborhoods $U$ for which \ref{3} holds. 

One would like simple conditions that imply the existence of a strict attractor. 
One possible criterion  is in   \cite{bgms}.
%In \cite{bgms}, the authors point out the following criterion to have the whole phase space as a strict attractor.
An IFS generated by a family $\Phi$ of continuous maps of $X$ is
quasi-symmetric if there is $\phi \in \Phi^+$ so that its inverse map $\phi^{-1} \in \Phi^+$. 
In \cite{bgms} it is shown that in case IFS $(X, \Phi^+)$ is forward minimal and $\Phi$ is  quasi-symmetric,
then $X$ is an attractor.
Theorem \ref{attfixed}(\ref{p1}) below provides a different and elementary condition that implies that the whole phase space is an attractor.

\begin{mainthm}\label{attfixed}
Let   $\Phi=\{\phi_1,\dots,\phi_\ell\}$ be a finite set of continuous maps on $X$ and IFS $(X, \Phi^+)$ be forward minimal. 
Assume there exists $\phi: X \to X$ belonging to $\Phi$ with an attracting fixed point $p$. 
 Then the followings hold:
\begin{enumerate}
\item\label{p1} The phase space $X$ is an attractor of  IFS $(X, \Phi^+)$.
\item\label{p2} The system $(\mathcal{K}(X),F)$ is equicontinuous.
%pseudo orbit tracing property.
\end{enumerate}
\end{mainthm}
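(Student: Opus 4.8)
My plan is to prove both items simultaneously by first establishing a \emph{uniform} refinement of (1): there is a function $\epsilon\mapsto N(\epsilon)$, \emph{independent of the compact set}, such that $d_H(F^j(B),X)\le\epsilon$ for every nonempty $B\in\mathcal K(X)$ and every $j\ge N(\epsilon)$. Part (2) then follows almost for free. The first step is to unpack the hypothesis on $p$ into a usable form: fix an open neighbourhood $U$ of $p$ that is trapping for $\phi$ (so $\phi(\overline U)\subseteq U$) and on which the iterates converge uniformly, i.e. for each $\eta>0$ there is $N_\eta$ with $\phi^s(\overline U)\subseteq B(p,\eta)$ for all $s\ge N_\eta$; this is the standard content of asymptotic stability of an attracting fixed point.

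Next comes what I expect to be the main obstacle: reaching $U$ from everywhere in \emph{uniformly bounded} time. Since $(X,\Phi^+)$ is forward minimal, every point is transitive, so for each $x\in X$ there is $\varphi\in\Phi^+$ with $\varphi(x)\in U$, and by continuity this same $\varphi$ carries an entire open neighbourhood of $x$ into $U$. Compactness of $X$ then yields finitely many words $\varphi_1,\dots,\varphi_r$ whose domains-of-good-behaviour cover $X$, so that every $y\in X$ satisfies $\varphi_i(y)\in U$ for some $i$, with all lengths bounded by $L:=\max_i|\varphi_i|$. I would then exploit transitivity of $p$ itself: given $\epsilon>0$, choose a finite $\epsilon/3$-net $z_1,\dots,z_m$ of $X$ and words $\psi_1,\dots,\psi_m\in\Phi^+$ with $d(\psi_k(p),z_k)<\epsilon/3$, and pick $\eta>0$ (via uniform continuity of the finitely many $\psi_k$) so that $\psi_k(B(p,\eta))\subseteq B(\psi_k(p),\epsilon/3)$ for every $k$.

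Now I assemble the words. For any nonempty $B$, any $y\in B$, any target $z_k$, and any $j$, choose $i$ with $\varphi_i(y)\in U$ and set $s:=j-|\psi_k|-|\varphi_i|$; as soon as $j\ge N(\epsilon):=N_\eta+\max_k|\psi_k|+L$ we have $s\ge N_\eta$, and the length-$j$ word $\psi_k\circ\phi^s\circ\varphi_i\in\Phi^j$ sends $y$ into $B(z_k,2\epsilon/3)$. Since $F^j(B)\supseteq F^j(\{y\})$ contains this point (by monotonicity of $F$) and the $z_k$ form an $\epsilon/3$-net, $F^j(B)$ is $\epsilon$-dense in $X$, i.e. $d_H(F^j(B),X)\le\epsilon$; crucially $N(\epsilon)$ does not depend on $B$. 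I would also record that $X$ is a legitimate invariant set for $F$: the nonempty closed set $F(X)$ is forward invariant, hence equals $X$ by minimality, so $F(X)=X$ and $F^j(B)\to X$ establishes (1).

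For (2) the uniform bound does all the work. Given $\epsilon>0$, for every $n\ge N(\epsilon/2)$ and all $A,A'\in\mathcal K(X)$ the triangle inequality gives $d_H(F^n(A),F^n(A'))\le d_H(F^n(A),X)+d_H(X,F^n(A'))\le\epsilon$, so the tail of the family $\{F^n\}$ is uniformly small regardless of the inputs. The finitely many remaining maps $F^0,\dots,F^{N(\epsilon/2)-1}$ are continuous on the compact metric space $(\mathcal K(X),d_H)$, hence uniformly continuous, so there is a single $\delta>0$ making each of them move points by less than $\epsilon$ whenever the inputs are $\delta$-close. Combining the two ranges, $d_H(A,A')<\delta$ forces $\sup_{n\ge 0}d_H(F^n(A),F^n(A'))\le\epsilon$, which is exactly the equicontinuity of $(\mathcal K(X),F)$.
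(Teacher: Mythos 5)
Your proposal is correct and follows essentially the same route as the paper: trap orbits into a neighbourhood of $p$ in uniformly bounded time using forward minimality plus compactness, funnel through $p$ via $\phi^s$, spread out to an $\epsilon$-net of $X$ by minimality, and then derive equicontinuity from the uniform tail estimate together with uniform continuity of the finitely many initial iterates of $F$. The only (cosmetic) differences are that you state the uniformity of $N(\epsilon)$ in the compact set $B$ explicitly from the outset and verify $F(X)=X$, points the paper leaves implicit.
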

As a consequence of equicontinuity of the Hutchinson operators, we study the shadowing property (sometimes called 
pseudo orbit tracing property) of the Hutchinson operator on the  hyperspace $\mathcal{K}(X)$. 
We will need to consider the $i$-th iteration of the map
$F:\mathcal{K}(X)\to \mathcal{K}(X)$ as $F^{i+1} = F^i \circ F$, where $F^0$
is  the identity map. An orbit of the system $(\mathcal{K}(X),F)$  is a sequence
$\{X_i\}_{i=0}^\infty$ with $X_{i+1} = F(X_i)$, for all $i\in \mathbb{N}$. We say that a sequence
$\xi=\{X_i\}_{i=0}^\infty \subset 2^X$ is a $\delta$-pseudo-orbit if $d_H(X_{i+1}, F(X_i)) < \delta$, for all $i \in\mathbb{N}$.
%A $\delta$-pseudo-orbit is one of possible formalizations of the notion of an approximate trajectory. 
The Hutchinson operator $F:\mathcal{K}(X)\to \mathcal{K}(X)$  has the shadowing property
%pseudo orbit tracing property
if for
each $\epsilon > 0$ there exists $\delta>0$ such that for any $\delta$-pseudo-orbit $\xi = \{X_i\}$ one can find a set
$Y$ in $ \mathcal{K}(X)$ such that $d_H(X_i,F^i(Y))<\epsilon$, for all $i\in \mathbb{N}$. 
In \cite{fg}, it is shown that a cascade system  $(X,f)$ has the 
shadowing property
%pseudo orbit tracing property
if and only if the cascade system generated by the induced map
$(\mathcal{K}(X),2^f)$  has the 
%pseudo orbit tracing property. 
shadowing property.
% So in a sense it is natural to say  that an IFS$(X, \Phi^+)$ has the 
% %pseudo orbit tracing property 
%shadowing property
%if its Hutchinson operator has the 
% %pseudo orbit tracing property.
%shadowing property.
In \cite{bie99}, the author showed that the Hutchinson operator of a given weak contracting
IFS on a compact metric space has the 
shadowing property.
%pseudo orbit tracing property.
The base of the argument
of the main result in \cite{bie99} comes from the following fact: the Hutchinson operator of weak
contracting IFSs has a unique fixed point \cite{bie95}. 
%Recall that the IFS $(X,\Phi^+)$ is weak
%contraction if for every $x\neq y$ the following holds
%$$\max_{\phi\in \Phi}d(\phi(x), \phi(y)) < d(x, y) ;\ \  \ \forall \ x, y \in X.$$
%In Theorem \ref{det} (\ref{p3}) below, we generalize this result to  IFSs with equicontinuous Hutchinson operator.

\begin{mainthm}\label{shadowing}
Let   $\Phi=\{\phi_1,\dots,\phi_\ell\}$ be a finite set of continuous maps on $X$. 
Suppose $X$ is an attractor of IFS $(X, \Phi^+)$ and  the Hutchinson operator $F$ 
is equicontinuous on the hyperspace  $\mathcal{K}(X)$.
Then the IFS  $(X, \Phi^+)$ has the  shadowing property.
%\begin{enumerate}
%\item\label{p3}  IFS  $(X, \Phi^+)$ has the  shadowing property.
%\item \label{p4} 
%IFS $(X, \Phi^+)$ satisfies the deterministic chaos game.
%\end{enumerate}
\end{mainthm}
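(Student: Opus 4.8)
The plan is to establish the shadowing property in the sense defined just above the statement, i.e.\ for the Hutchinson operator $F:\mathcal{K}(X)\to\mathcal{K}(X)$, which is precisely what ``the IFS $(X,\Phi^+)$ has the shadowing property'' means here. The driving idea is that the attractor hypothesis, which says $F^j(B)\to X$ for every $B\in\mathcal{K}(X)$, can be upgraded under equicontinuity to \emph{uniform} convergence of all $F$-orbits to the single point $X\in\mathcal{K}(X)$; once this is in hand, a pseudo-orbit can be shadowed by its own initial set.

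\emph{Step 1 (uniform attraction).} I would first show that $a_i:=\sup_{B\in\mathcal{K}(X)}d_H(F^i(B),X)\to 0$. Since $X$ is compact, $(\mathcal{K}(X),d_H)$ is compact. Equicontinuity supplies, for each $\epsilon_0>0$, a modulus $\eta>0$ such that $d_H(B,B')<\eta$ forces $d_H(F^i(B),F^i(B'))<\epsilon_0$ simultaneously for all $i\ge 0$. Covering $\mathcal{K}(X)$ by finitely many $\eta$-balls about $B_1,\dots,B_m$ and invoking $F^i(B_k)\to X$ for each $k$ yields a single $N$ past which every $F^i(B_k)$ is $\epsilon_0$-close to $X$; the triangle inequality then puts \emph{every} orbit within $2\epsilon_0$ of $X$ for $i\ge N$. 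This Dini-type upgrade is the one place where both the attractor property and equicontinuity are essential.

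\emph{Step 2 (shadowing in a bounded window).} Given $\epsilon>0$, I would fix $N_0$ with $a_i<\epsilon/4$ for all $i\ge N_0$ (Step 1), and then use equicontinuity to choose $\eta$ (setting $\delta\le\eta$) so that an $\eta$-perturbation remains within $\epsilon/(4N_0)$ under every $F^j$. For a $\delta$-pseudo-orbit $\{X_i\}$ I take the shadowing set $Y:=X_0$. For $0\le i\le N_0$, telescoping along $F^i(X_0),F^{i-1}(X_1),\dots,X_i$ controls each of the at most $N_0$ consecutive jumps by $\epsilon/(4N_0)$, giving $d_H(X_i,F^i(Y))<\epsilon/4$. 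For $i\ge N_0$ the decisive move is to telescope only over the last $N_0$ steps: the same estimate gives $d_H(X_i,F^{N_0}(X_{i-N_0}))<\epsilon/4$, while $d_H(F^{N_0}(X_{i-N_0}),X)\le a_{N_0}<\epsilon/4$, so $d_H(X_i,X)<\epsilon/2$; combined with $d_H(F^i(Y),X)\le a_i<\epsilon/4$ this produces $d_H(X_i,F^i(Y))<3\epsilon/4<\epsilon$. Thus $Y=X_0$ $\epsilon$-shadows the pseudo-orbit for every $i$.

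The main obstacle is precisely that $F$ need not contract, so a naive telescoping over all of $0,\dots,i$ accumulates error proportionally to $i$ and fails to shadow the tail. The remedy, and the reason equicontinuity cannot be dropped, is the uniform attraction of Step 1: it lets me restrict the error-accumulating telescoping to a \emph{fixed} window of length $N_0$ and absorb all earlier discrepancy into closeness to the fixed point $X$. I therefore expect Step 1 to be the technical heart of the argument, with Step 2 reducing to careful $\epsilon/4$ bookkeeping.
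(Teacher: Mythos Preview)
Your proof is correct and follows essentially the same strategy as the paper: establish uniform attraction of all $F$-orbits to $X$, shadow a bounded initial window, and show that the tail of any $\delta$-pseudo-orbit must stay close to $X$ so that it is automatically shadowed by any true orbit. The only differences are cosmetic: the paper quotes \v{Z}\'{a}\v{c}ik's finite shadowing lemma for the bounded window where you telescope directly using equicontinuity, and your Step~1 supplies the Dini-type justification of uniform attraction that the paper compresses into ``as above, by compactness of $X$''.
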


%+++++++++++++++++++++++++++++++++++++++++++++++++++++++++++++++++++++++
%
%
%
%
%
%                                                                              .  ,
%                                                                               --
%
%
%
%
%
%Section 2222222222222222222222222222222222222222222222222222222222222222
%
%
%
%
%
%
%
%
%
%
%________________________________________________________________________
\section{Attractors for forward minimal IFSs}
%In \cite{bgms}, the authors  point out the following criterion to have  the whole of phase space as a strict attractor.
%\begin{theorem}[\cite{bgms}]
%Under any of the following assumptions for IFS$(X, \Phi^+)$, the phase space $X$ is a strict attractor.
%
% IFS generated by a family $\Phi$ of continuous maps of $X$ is
%quasi-symmetric if there is $\phi \in \Phi^+$ so that its inverse map $\phi^{-1} \in \Phi^+$. 
%\begin{enumerate}
%\item IFS $(X, \Phi^+)$ is forward minimal and $\Phi$ is  quasi-symmetric.
%
%
%
%\item $X=S^1$ and  IFS $(S^1, \Phi^+)$ is a forward minimal IFS generated by a 
%finite set of homeomorphisms $\Phi$ without a common invariant probability measure. 
%
%\end{enumerate}
%\end{theorem}

%In this section, under  the following assumption, we assert that the phase space is a strict attractor:
%\begin{itemize}
%\item  Minimal IFS $(X, \Phi^+)$  with quasi-attracting fixed point.
%\end{itemize}

In this section we prove Theorem~\ref{attfixed}.
As a first step, we make the following observation which is routine
and so we do not provide a proof.
 %to gain and so we avoid providing  proof.

\begin{proposition}\label{4.6} 
If $x$ is an equicontinuous point of an IFS $(X, \Phi^+)$, then $\{x\}$ is an equicontinuous 
point of $(\mathcal{K}(X), F)$. 
\end{proposition}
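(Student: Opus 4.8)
The plan is to unwind the two notions of equicontinuous point and reduce the statement to a single Hausdorff-distance estimate that is driven by the equicontinuity of $x$ in the base system. Being an equicontinuous point $\{x\}$ of the cascade $(\mathcal{K}(X),F)$ should mean that the identity $id:(\mathcal{K}(X),d_H)\to(\mathcal{K}(X),d_F)$ is continuous at $\{x\}$, where $d_F(A,B)=\sup_{i\ge 0}d_H(F^i(A),F^i(B))$; this is the exact analogue, for the map $F$, of the metric $d_\Phi(u,v)=\sup_{\phi\in\Phi^+}d(\phi(u),\phi(v))$ that defines equicontinuous points of $(X,\Phi^+)$. So, given $\e>0$, I must produce $\delta>0$ such that $d_H(\{x\},B)<\delta$ forces $\sup_{i\ge 0}d_H(F^i(\{x\}),F^i(B))\le\e$ for every $B\in\mathcal{K}(X)$.

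First I would record the explicit descriptions $F^i(\{x\})=\overline{\{\phi(x):\phi\in\Phi^i\}}$ and $F^i(B)=\overline{\{\phi(b):\phi\in\Phi^i,\ b\in B\}}$, which follow from the identity $F^i(Y)=\overline{\bigcup_{\phi\in\Phi^i}\phi(Y)}$ recorded in the introduction. The heart of the argument is then the estimate
\[
d_H\bigl(F^i(\{x\}),\,F^i(B)\bigr)\ \le\ \sup_{b\in B} d_\Phi(x,b),\qquad i\ge 1.
\]
To prove it I would bound the two one-sided Hausdorff distances separately: a generic point $\phi(x)$ of $F^i(\{x\})$ is matched against $\phi(b)\in F^i(B)$ (for any chosen $b\in B$), and $d(\phi(x),\phi(b))\le d_\Phi(x,b)$ because $\phi\in\Phi^i\subseteq\Phi^+$; symmetrically, a generic point $\phi(b)$ of $F^i(B)$ is matched against $\phi(x)\in F^i(\{x\})$, giving the same bound. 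Passing to closures does not enlarge these suprema.

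To finish, I would invoke equicontinuity of $x$: choose $\delta_1>0$ with $d(x,y)<\delta_1\Rightarrow d_\Phi(x,y)\le\e$, and set $\delta=\min(\delta_1,\e)$. If $d_H(\{x\},B)<\delta$, then every $b\in B$ satisfies $d(x,b)<\delta_1$, whence $d_\Phi(x,b)\le\e$, and the displayed estimate gives $d_H(F^i(\{x\}),F^i(B))\le\e$ for all $i\ge 1$; the remaining coordinate $i=0$ is just $d_H(\{x\},B)<\delta\le\e$. Taking the supremum over $i\ge 0$ yields $d_F(\{x\},B)\le\e$, which is exactly continuity of $id:(\mathcal{K}(X),d_H)\to(\mathcal{K}(X),d_F)$ at $\{x\}$.

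The step needing the most care is the one-sided matching: one must check that the supremum over $\Phi^+$ defining $d_\Phi$ genuinely dominates each composition $\phi\in\Phi^i$ (so that $d_\Phi$, rather than a single-level distance, is the quantity controlled by equicontinuity) and that passing to closures introduces no farther-apart points. The only other subtlety is the coordinate $i=0$, where $d_\Phi$ need not dominate $d$ if $\Phi^+$ omits the identity; this is dealt with harmlessly by the extra requirement $\delta\le\e$.
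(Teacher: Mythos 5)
Your argument is correct and complete: the key estimate $d_H(F^i(\{x\}),F^i(B))\le\sup_{b\in B}d_\Phi(x,b)$ for $i\ge 1$, combined with the observation $d_H(\{x\},B)=\sup_{b\in B}d(x,b)$ and the separate handling of $i=0$, is exactly the routine verification the paper declines to write out (it states the proposition ``is routine and so we do not provide a proof''). There is nothing in the paper to compare against, and your proof is the natural one.
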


The converse statement is not necessarily true. 
%In Example \ref{4.7}, 
In the following example we give an IFS $(X, \Phi^+)$ with non-equicontinuous 
points but the cascade system   $(\mathcal{K}(X), F)$ is equicontinuous.

\begin{example}\label{4.7} 
	Suppose that $\Phi=\{f_1,f_2,f_3\}$ and $\Psi=\{f_1,f_2\}$ where $f_1, f_2, f_3:  [0, 1] \to [0, 1]$ are given as in 
	Figure 1. Let $f_\Phi$ and $F_\Psi$ denote the  Hutchinson operators for IFS $([0,1],\Phi^+)$  and IFS $([0,1],\Psi^+)$, respectively.
	In \cite{ip}, the authors 
	proved that IFS $([0, 1],\Phi^+)$ is a forward minimal IFS with equicontinuous and non-equicontinuous 
	points. Indeed, this system is neither sensitive nor equicontinuous. 
	\begin{figure*}[!h]
		\centering
		%\subfigure[ نمودار $T$ ]
		{\label{ali}
			 \includegraphics[width=0.4\textwidth]{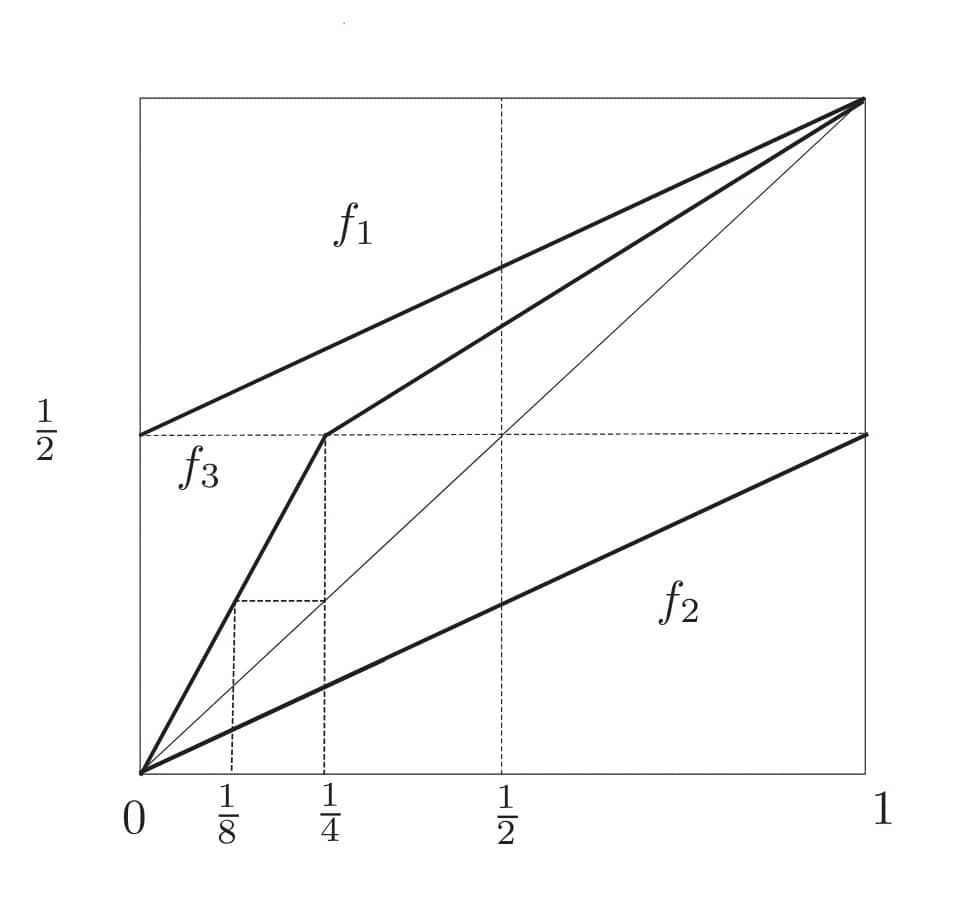}
		}
		\caption{ }
	\end{figure*}
	As is well known, in the sense of Barnsley, $I=[0,1]$ is 
	a fractal of the IFS $(I,\Psi^+)$ and so $F_\Psi^n(x)\to I$, in the Hausdorff metric, for every $x\in I$.
	It is not hard to see that for every $\epsilon>0$ one can chose a natural number $n_0$ so that
	$d_H(F^i_\Psi(x),I)<\dfrac{\epsilon}{2}$, for every $i>n_0$ and every $x\in I$.
	Since $\Psi$ contains of contraction maps, for $\epsilon>0$, we have
	$$d_{F_\Psi}(x,y)=\sup_{i\in \mathbb{N}_0}d_H(F^i_\Psi(x),F^i_\Psi(y))<\epsilon;\ \ \  \forall y\in B(x,\dfrac{\epsilon}{2}).$$
	Clearly,  $F^i_\Psi(x)\subset F^i_\Phi(x)\subset I$ for any $x\in I$ and any $i\in \mathbb{N}$. 
	Then
	% $d_H(F^i_\Psi(x),I)<\dfrac{\epsilon}{2}$ 
	$d_H(F^i_\Phi(x),I)<\dfrac{\epsilon}{2}$ for any $x\in X$ and any $i>n_0$.
	It implies that for every $\epsilon>0$ there exists $\delta>0$ so that
	$$d_{F_\Phi}(x,y)=\sup_{i\in \mathbb{N}_0}d_H(F^i_\Phi(x),F^i_\Phi(y))<\epsilon;\ \ \  \forall y\in B(x,\delta).$$
	It means that   $\{x\}$ is an equicontinuous point of $(\mathcal{K}(X), F_\Phi)$, for every $x\in I$. 
\end{example}

%
%In summary, 
%$$
%\mathrm{equicontinuous\ IFS} (X, \Phi^+)\Longrightarrow  \mathrm{equicontinuous}\ (\mathcal{K}(X), F)
%$$

%\begin{remark}\label{6.10} 
% Let $\Phi^+$ be a semigroup generated by a set of continuous  maps $\Phi$ on a set $X$.
%If for every $x \in X$,
%$\mathrm{Ls}\Phi^n(x) = X$, then is minimal. Moreover, if for every $x \in X$, 
%$\mathrm{Li}\Phi^n(x) = X$, then $X$ is an attractor \cite{LM00,MS03}.
%We recall the definition of the lower $\mathrm{Li} B_n$
%from Kuratowski topology for a sequence of sets
%$B_n\sProcess started

%Process exited normallyubset X, n \in\mathbb{N}$. A point $s$ belongs to $\mathrm{Li} B_n$
%if and only if for every open subset $U \ni s$ the set of
%indices $\Theta (U , s)$ is co-finite, where
%$$\Theta(U, s) := \{n \in\mathbb{N} : U \cap B_n \neq \emptyset\}.$$
%Note that $\mathrm{Li} B_n$
%is closed subset of $X$.
%\end{remark}

\begin{proof}[Proof of Theorem~\ref{attfixed} (\ref{p1})]
Without loss of generality, let $\phi_1$ belonging to $\Phi$ be the map admitting an attracting fixed point $p$.
Take a neighborhood  $U$ of $p$ such that $\phi_1(\overline{U})\subset U$ and    $\bigcap_{n\geq0} \phi_1^n(U)=\{p\}$.
Since the IFS $(X, \Phi^+)$ is forward minimal, there exist $\varphi_1,\dots,\varphi_k$ in $\Phi^+$ so that 
\begin{equation}\label{minimalpro}
\bigcup_{i=1}^k \varphi_i^{-1}(U)=X.
\end{equation}
Hence, for every $x\in X$, one can choose $i$ in $\{1,\dots,k\}$ so that $\varphi_i(x)\in U$.
%We remark that $U$ is the basin of the attractor fixed $p$ and $\phi_1\in \Phi$ and so, 
So, for every $x\in X$ 
$$
F^n(x) \cap U\neq \emptyset; \ \ \ \ \forall \ n\geqslant k_U:=\max\{|\varphi_i|:\ i=1,\dots,k\}.
$$

Take  $x_0$ and $y_0$ in $X$ and $\epsilon>0$.
% On one hand, 
$F^n(x_0) \cap U\neq \emptyset$ for every $ n\geqslant k_U$.
For some  $i\in\{1,\dots,k\}$ with
$\varphi_{i}(x_0)\in U$, we have
\begin{equation}\label{att12}
\phi_1^n(\varphi_{i}(x_0))\to p\ \ \ \ \mathrm{as}\ \ n\to\infty.
\end{equation}
By the forward minimality of IFS $(X, \Phi^+)$  and continuity of its generators, 
one can find $\delta>0$ and   $\varphi'_0\in \Phi^+$ so that $B(p,\delta)\subseteq U$ and  $\varphi'_0(B(p,\delta))\subseteq B(y_0,\epsilon/2)$.
By (\ref{att12}), there exists an integer $n_0$ so that 
\[
\phi_1^n(\varphi_{i}(x_0)) \in B(p,\delta),
\] 
for all $n\geq n_0$.
Therefore, 
\[
\varphi'_0(\phi_1^n(\varphi_{i}(x_0)))\in B(y_0,\epsilon/2),
\] 
for all $n\geq n_0$.
It follows that 
\begin{equation}\label{sh2}
F^{j}(x_0)\cap B(y_0,\epsilon/2)\neq \emptyset;\ \ \ \forall\  \ j\geq |\varphi_i|+n_0+|\varphi'_0|.
\end{equation}
Furthermore, for every $x$ in $X$, one has
\begin{equation}\label{sh3}
F^{j}(x)\cap B(y_0,\epsilon/2)\neq \emptyset; \ \ \ \forall\ \ j\geq k_U+n_0+|\varphi'_0|.
\end{equation}
%We remark that $k_U=\max\{|\varphi_i|:\ i=1,\dots,k\}$.
% By (\ref{sh2}), we have   $y_0\in \Li\Phi^n(x_0)$. 
% By remark \ref{6.10}, the phase space $X$ is an attractor.
% Take  $x_0$  in $X$ and $\epsilon>0$. 
Since $X$ is compact,  by using the mentioned notation and argument,  one can find points $y_0,y_1,\dots, y_s$ in $X$ 
and $\varphi'_0,\varphi'_1,\dots, \varphi'_s$ in $\Phi^+$ so that the following hold:
\begin{itemize}
	\item $\bigcup_{i=0}^s B(y_i,\epsilon/2)=X$.\\
	\item $F^{j}(x_0)\cap B(y_i,\epsilon/2)\neq \emptyset;\ \ \forall\ i=0,1,\dots s \ \wedge \ \forall\  j\geq k_U+n_0+|\varphi'_i|.$
\end{itemize}
Observe that 
\begin{equation}\label{forallx}
	d_H(F^j(x),X)<\epsilon/2;\ \ \ \forall\ x\in X \ \wedge\ \forall\ j\geq k_U+n_0+s_\epsilon,
\end{equation}
where $s_\epsilon=\max\{|\varphi'_i|:\ i=0,1,\dots,s\}$.
\end{proof}

\begin{proof}[Proof of Theorem~\ref{attfixed} (\ref{p2})]
%(\ref{p2}) 
By the argument for part (\ref{p1}), since $\varphi_i(x_0)\in U$ for some $i$ and $\phi_1,\varphi_{i}$ are continuous, 
one can find $\delta_0>0$ and an integer $n_*$ so that 
$ \phi_1^{n}(\varphi_{i}(B(x_0,\delta_0)))\subset B(p,\delta)\subset U$, for $n\geq n_*$.

 Again, by applying  the argument of part (\ref{p1})
 on every $y\in B(x_0,\delta_0)$,
we have  
\begin{equation*}
d_H(F^j(y),X)<\epsilon/2; \ \ \ j\geq k_U+n_*+s_\epsilon.
\end{equation*}
So, it is clear that
\begin{equation}\label{equico}
\sup\{d_H(F^j(x),F^j(y)):\ x,y\in B(x_0,\delta_0)\ \ \wedge\ \  j\geq k_U+n_*+s_\epsilon\}<\epsilon.
\end{equation}
By continuity of generators for some 
$\delta_{x_0}<\delta_0$, we have
\begin{equation}
\max\{d_H(F^j(x),F^j(y)):\ x,y\in B(x_0,\delta_{x_0})\ \ \wedge\ \  j\leq k_U+n_*+s_\epsilon\}<\epsilon.
\end{equation}
Together with (\ref{equico}), one can conclude equicontinuity of the system $(\mathcal{K}(X),F)$ restricted to the set %$\mathcal{SP}(X)
$\{\{x\}|\ x\in X\}$.
% by $\mathcal{SP}(X)$
 
 To continue the proof, suppose that $D$ is an arbitrary element of $\mathcal{K}(X)$. 
 By continuity of generators, there exists $\delta_D>0$ so that  
$$
\sup_{j\leq k_{U}+n_0+s_\epsilon}d_H(F^j(D),F^j(D'))<\epsilon;\ \ \ \forall\ D'\in \mathcal{K}(X)\cap B_{d_{_{H}}}(D,\delta_D).
$$
By (\ref{forallx}), $d_H(F^j(x),X)<\frac{\epsilon}{2}$, for all $x\in D\cup D'$ and all $j\geq k_{U}+n_0+s_\epsilon$.
 Thus, 
$$\sup_{j\geq k_{U}+n_0+s_\epsilon}d_H(F^j(D),F^j(D'))<\epsilon;\ \ \ \forall\ D'\in \mathcal{K}(X).
$$
 Hence, 
$$
\sup_{i\in \mathbb{N}}d_H(F^i(D),F^i(D'))<\epsilon;\ \ D'\in \mathcal{K}(X)\cap B_{d_H}(D,\delta_D),
$$
which completes the proof of part (\ref{p2}).
\end{proof}

\begin{example}\label{4.8} 
	Suppose $g_1$ is a north-south pole diffeomorphism of the circle $S^1$
	possessing an attracting 
	fixed point $p$ as a north pole with multipliers $1/2 < g'_1(p) < 1$ and a repelling fixed point $q$ as south 
	pole. Consider the map $g_2 = R_\alpha$ where $R_\alpha$ is the rotation by irrational angle $\alpha$ on the circle. 
	Write $\Phi=\{g_1,g_1^{-1},g_2,g_2^{-1}\}$ and $\Psi=\{g_1,g_2\}$. 
	
	Both of IFSs  $(S^1,\Phi^+)$ and $(S^1,\Psi^+)$ are forward minimal and sensitive. 
    By  following \cite{BFS}, the IFS  $(S^1,\Phi^+)$ is quasi-symmetric, and so  the circle $S^1$
	is an attractor of IFS $(S^1, \Phi^+)$.
    Since $g_1$ admitting an attracting fixed point, Theorem~\ref{attfixed} implies that  the cascade system $(\mathcal{K}(S^1), F_\Psi)$ is equicontinuous 
	on the set $\mathcal{K}(X)$ 
	%$\mathcal{SP}(X)$ 
	and also the circle $S^1$
	is an attractor of IFS $(S^1, \Psi^+)$. 
\end{example}
%+++++++++++++++++++++++++++++++++++++++++++++++++++++++++++++
%
%
%
%

%_______________________>>>>                                             333333333333333333333333333333

%                                                       . ,
%                                                      -----

%
%
%

%____________________________________________________________________________
\section{Approximation of attractors by the Hutchinson shadowing property}
We start  with approximation of the whole  space $X$ (our attractor) by pseudo orbits of the Hutchinson operator of an IFS $(X, \Phi^+)$.
\begin{proof}[Proof of Theorem~\ref{shadowing} ]

	As above, 
	by compactness of $X$, we have that 
	for every closed subset $Y$ of $X$,
	$d_H(F^i(Y),X)<\frac{\epsilon}{4}$,  for every $i\geq n_{_{\frac{\epsilon}{4}}}$.
	
	On the other hand, according to Theorem 5  of \cite{zac}, for an arbitrary natural number $n$ there exist 
	positive real numbers $\delta_n > 0$ and $\gamma_n> 0$ such that every $\delta_n$-pseudo-orbit 
	$\{X_i\}^n_{i=0}$ in $\mathcal{K}(X)$ is 
	$\frac{\epsilon}{4}$-shadowed by the orbit $\{F^i(Y)\}^n_{i=0}$ for an arbitrary 
	$Y\in B(X_0,\gamma_n)\subseteq \mathcal{K}(X).$
	
	These yield that for every $\delta_n$-pseudo-orbit 
	$\{X_i\}^\infty_{i=0}$ in $\mathcal{K}(X)$, with $n>n_{_{\frac{\epsilon}{4}},}$ by inductive process  one can prove that
	\begin{equation}\label{xxi}
		\sup_{i\geq n_{_{\frac{\epsilon}{4}}}}d_H(X,X_i)<\frac{\epsilon}{2}.
	\end{equation}
	Indeed, for every $k\in \mathbb{N}\cup \{0\}$ and finite subsequence $\{X_i\}_{i=k}^{k+n}$ of  
	the $\delta_n$-pseudo-orbit $\{X_i\}_{i=0}^\infty$, we can find $Y_k$ with $d_H(X_k,Y_k)<\gamma_n$ so that 
	$$
	d_H(F^j(Y_k),X_{j+k})<\frac{\epsilon}{4};\ \ \ \forall\ j=0,1,\dots,n.
	$$
	Moreover,  for  $j+k>n_{_{\frac{\epsilon}{4}}}$, $d_H(F^j(Y_k),X)<\frac{\epsilon}{4}$.
	Thus, for  $j+k>n_{_{\frac{\epsilon}{4}}}$, $d_H(X,X_{j+k})<\frac{\epsilon}{2}$.
	By induction on $k$, one can prove the inequality \ref{xxi}.
	
	So,  it is easy to see that for every $\delta_n$-pseudo-orbit 
	$\{X_i\}^\infty_{i=0}$ in $\mathcal{K}(X)$ one has 
	$$\sup_{0 \leq i\leq \infty}d_H(F^i(Y),X_i)<\epsilon.$$
\end{proof}

  an example in which $F$ is  equicontinuos but $F^{-1}$ is sensitive.
\begin{example}
Let us equipped the product space  $\Sigma^+_2=\{1,2\}^\mathbb{N}$
to the metric $d(\eta, \omega) =\inf\{2^{1-n}: \ \eta_i = \omega_i,\ \forall \ i<n\}$. 
For $j = 1, 2$, let $(\phi_j(\eta))_1 = j$ and $(\phi_j(\eta))_i = \eta_{i-1}$ for $i\geq 2$. 
Take $\Phi = \{\phi_1, \phi_2\}$. In this case $F^{-1}$ is a map and it is the shift map given by 
$(F^{-1}(\eta))_i = (\sigma(\eta)) _i=\eta_{i+1}$, for every $i$. Thus, the periodic points of $F^{-1}$
is dense and so it is sensitive. However, the $F$  is equicontinuous.
\end{example}

%++++++++++++++++++++++++++++++++++++++++++++++++++++++++++++++++++
%
%
%
%
%                                                       444444444444444
%
%
%
%**********                             444444444444444444
%
%
%
%
%
%
%
%
%
%Section                                           44444444444                                                              . ,
%                                                                                                                                      ---
%
%
%
%
%
%                                      
%|||||||||||||  ;)   ;)   ;)  |\\\\\\\\\\\\\\\\\\\\\\\\\\\\\\\\\\\\\\\\\\\\\\\\\\\\\\\\\\\\\\\\\\\\\\\\\\\\\\\\\\\\\\\\\\

%55555555555555555555555555555555555555555555555555555555555555555555

\textbf{Acknowledgments.} We thank Ale Jan Homburg for useful discussions and suggestions.

\end{document}